\documentclass[a4paper,11pt]{amsart}
\usepackage[utf8]{inputenc}
\usepackage{amsmath,amssymb}
\usepackage{array,enumerate}
\usepackage[left=3.5cm,right=3.5cm,bottom=3.5cm,top=3.5cm]{geometry}
\usepackage{comment,amsthm, mathtools}
\usepackage{natbib}
\usepackage{tikz-cd}
\usepackage{hyperref}

\bibpunct{[}{]}{,}{n}{}{;}



\newtheorem{theorem}{Theorem}[section]
\newtheorem{lemma}[theorem]{Lemma}
\newtheorem{proposition}[theorem]{Proposition}

\newtheorem{corollary}[theorem]{Corollary}
\theoremstyle{definition}
\newtheorem{definition}[theorem]{Definition}
\newtheorem{example}[theorem]{Example}

\DeclareMathOperator{\Aut}{Aut}

\DeclareMathOperator{\Sym}{Sym}

                    %
\newcommand{\SA}{\mathbb{A}}                    %
\newcommand{\CA}{\mathcal{A}}                    %
\newcommand{\CB}{\mathcal{B}}                    %
\newcommand{\CM}{\mathcal{M}}                    %
\newcommand{\CT}{\mathcal{T}} 
 
\newcommand{\CV}{\mathcal{V}}                    %
                    %
                    %
                    %
                    %
                    %

\mathchardef\mhyphen="2D


\sloppy
\setcounter{tocdepth}{1}
\allowdisplaybreaks[1]

\title{A power structure over the Grothendieck ring of geometric dg categories}
\author{\'Ad\'am Gyenge}
\address{Budapest University of Technology and Economics, Department of Algebra and Geometry, Institute of Mathematics, M\H{u}egyetem rakpart 3, H-1111, Budapest, Hungary}
\email{gyenge.adam@ttk.bme.hu}

\subjclass[2010]{13D15 (primary), 14F05, 14D23}

\keywords{Power structure, dg category, Grothendieck ring, zeta function}

\begin{document}

\begin{abstract}
We prove the existence of a power structure over the Grothendieck ring of geometric dg categories. 
We show that a conjecture by Galkin and Shinder (proved recently by Bergh, Gorchinskiy, Larsen, and Lunts) relating the motivic and categorical zeta functions of varieties can be reformulated as a compatibility between the motivic and categorical power structures. Using our power structure we show that the categorical zeta function of a geometric dg category can be expressed as a power with exponent the category itself. We give applications of our results for the generating series associated with Hilbert schemes of points, categorical Adams operations and series with exponent a linear algebraic group. 
\end{abstract}

\maketitle


\section{Introduction}
\label{sec:intro}

A power structure over a (semi)ring $R$
is a map \[(1+ t R[[t]]) \times R \to 1 + t R[[t]]\colon (A(t), m) \to (A(t))^m\]
 such that all the usual properties of the exponential map hold. A power structure over the Grothendieck ring of varieties $K_0(Var)$ was defined in \cite{gusein2004power}.
It has turned out to be an effective tool in expressing certain generating functions associated with varieties, see e.g. \cite{gusein2006power}. Power structures on rings have a deep connection with $\lambda$-ring structures. The ring $K_0(Var)$ has a well-known 1-$\lambda$-ring structure (or, shortly, a 1-$\lambda$-structure) induced by the symmetric powers of varieties \cite{gusein2004power}. It turns out that the motivic zeta function of a variety has a particularly nice expression as a power with exponent the motive of the variety.

The derived category of coherent sheaves on a variety has been proposed as an analogue of the
motive of a variety for a long time now \cite{bondal2002derived}. In this analogy semiorthogonal decompositions are the tools for the simplification of this
``motive'' similar to splitting by projectors in Grothendieck's motivic theory. It has since then turned out that it is better to work with dg enhancements of these triangulated categories. In \cite{bondal2004grothendieck} it was shown that there is a motivic measure, i.e. a ring homomorphism
\[ \phi \colon K_0(Var) \to  K_0(gdg\mhyphen cat)\]
to a certain Grothendieck ring $K_0(gdg\mhyphen cat)$ of geometric dg categories. Briefly, a dg category is geometric if its homotopy category is a semiorthogonal summand in the derived category of perfect complexes on some smooth projective variety.

On the other hand, symmetric powers on $K_0(gdg\mhyphen cat)$ as defined in \cite{ganter2014symmetric} do not induce a 1-$\lambda$-ring structure, but instead they induce a 2-$\lambda$-ring structure. For example, in a 1-$\lambda$-ring $\mathrm{Sym}^n(1)=1$ for any integer $n$, whereas in a 2-$\lambda$-ring $\mathrm{Sym}^n(1)=p(n)$, the number of partitions of $n$. Both 1-$\lambda$-rings and 2-$\lambda$-rings are special cases of  pre-$\lambda$-rings.  It was shown in \cite{gusein2013pre} and we recall it in Section~\ref{subsec:powstrprel} below that from each pre-$\lambda$-structure on a ring one can obtain a power structure. The converse is not true in general. Many pre-$\lambda$-structures may correspond to one and the same power structure. 

When applying the method of \cite{gusein2013pre} on $K_0(Var)$ (resp. on $K_0(gdg\mhyphen cat)$), one starts with the motivic (resp. categorical) zeta function of an object that collects the symmetric powers of the object into a generating function. With the 1-$\lambda$-ring (resp. 2-$\lambda$-ring) structure induced by this zeta function on $K_0(Var)$ (resp. on $K_0(gdg\mhyphen cat)$) the ring homomorphism $\phi$ cannot be a $\lambda$-ring homomorphism. 
Our main observation is that $\phi$ is nevertheless compatible with the induced power structures.
\begin{theorem}[{Theorem~\ref{thm:ringhomo2}}]
	\label{thm:mupower}
	Equip $K_0(Var)$ with the power structure described in Example~\ref{ex:K0var} and $K_0 (gdg\mhyphen cat)$ with the power structure  described in Example~\ref{ex:dgcat2lr}. 
	Then the ring homomorphism $\phi \colon K_0(Var) \to K_0 (gdg\mhyphen cat)$ is compatible with the power structures. That is,
	\[ \phi\left((A(t))^{m}\right)=(\phi(A(t)))^{\phi(m)}. \]
\end{theorem}

Galkin and Shinder proved in \cite{galkin2015zeta} the following relation between the motivic and the categorical zeta functions of quasi-projective varieties of dimensions 1 and 2, and they conjectured the same relationship to hold in any dimension:
\[ Z_{cat}(\phi(X),t)=\prod_{n \geq 1} \phi(Z_{mot}(X,t^n)). \]
The conjecture in full generality was proved recently by Bergh, Gorchinskiy, Larsen, and Lunts \cite{bergh2017categorical}.
It turns out that the compatibility of $\phi$ with the power structures on its domain and range  is essentially equivalent with the conjecture. Hence, Theorem \ref{thm:mupower} can also be understood as a reinterpretation of the Galkin--Shinder conjecture in terms of power structures. 
Given the several recent applications of power structures \cite{morrison2015motivic, g2022configuration, ricolfi2020virtual}, we believe that this reinterpretation may also have its own benefits. 

Compatibility of power structures for another motivic measure is a central topic in \cite{pajwani2023power, bejleri2024symmetric}. In these cases the measure has values in the Grothendieck-Witt ring of quadratic forms. These results together with our Theorem~\ref{thm:mupower} give the hint that there may be a deeper relationship between the Grothendieck ring of geometric dg categories and the Grothendieck-Witt ring of quadratic forms; this relationship can be the subject of future studies. In this direction, see also \cite{vezzosi2016quadratic} for a construction of the Grothendieck-Witt ring of a (derived) stack.


An auxiliary result for the proof of Theorem \ref{thm:mupower}, which is interesting on its own, is that using the power structure on $K_0(gdg\mhyphen cat)$ the categorical zeta function of a geometric dg category $\CM$ can be expressed as the power of an infinite product with exponent the category itself.
\begin{theorem}[{Corollary~\ref{cor:zcatprod}}]
\label{thm:zetaexppow}
\[Z_{cat}(\CM,t)=\prod_{n=1}^{\infty}\left( \frac{1}{1-t^n}\right)^{[\CM]}\]
where the power structure on $K_0(gdg\mhyphen cat)$ is the one considered Theorem \ref{thm:mupower}.
\end{theorem}
Another implication of Theorem~\ref{thm:mupower} is the following.
\begin{corollary}[Corollary~\ref{cor:lexchange}]
\label{cor:lambdamorph}
\begin{enumerate}
\item Replace the 2-$\lambda$-ring structure on $K_0(gdg\mhyphen cat)$ with a 1-$\lambda$-ring structure using \eqref{eq:powcat}. Then $\phi$ is a morphism of 1-$\lambda$-rings.
\item Replace the 1-$\lambda$-ring structure on $K_0(Var)$ with a 2-$\lambda$-ring structure using \eqref{eq:2lfrom1l}. Then $\phi$ is a morphism of 2-$\lambda$-rings.
\end{enumerate}
\end{corollary}

Besides the ones above, we consider a couple of further applications in the final part of the paper. Our results imply several exponential type identities in $K_0(gdg\mhyphen cat)$ including ones which are pushed forward from $K_0(Var)$. For example, we obtain a formula for the generating series of the dg categories of the Hilbert scheme of points associated with a fixed smooth projective variety. We also introduce categorical Adams operations, and calculate a formula relating them to the pushforward of the motivic Adams operations. Finally, we deduce a formula for series with exponent a linear algebraic group. 



The structure of the paper is the following. In Section~\ref{sec:powstr}, after recalling the definition of a power structure, we prove Theorems~\ref{thm:mupower},~\ref{thm:zetaexppow}, Corollary~\ref{cor:lambdamorph}. The implications for Hilbert schemes of points, Adams operations and linear algebraic groups are given in Section~\ref{sec:appl}.

\section{Power structures}
\label{sec:powstr}

\subsection{Power structures and pre-$\lambda$-ring structures} 
\label{subsec:powstrprel}

In the rest of the paper let $t$ be a formal variable.
\begin{definition}[\cite{gusein2004power}]
	\label{def:powerstr}
	A \emph{power structure} over a (semi)ring $R$ is a map 
	\[(1+tR[[t]]) \times R \to 1+tR[[t]]\colon (A(t),m) \mapsto (A(t))^m\] 
	such that:
	\begin{enumerate}
		\item\label{it:powerstr1} $(A(t))^0=1$;
		\item\label{it:powerstr2} $(A(t))^1=A(t)$;
		\item\label{it:powerstr3} $(A(t) \cdot B(t))^m=(A(t))^m \cdot (B(t))^m$;
		\item\label{it:powerstr4} $(A(t))^{m+n}=(A(t))^m\cdot (A(t))^n$;
		\item\label{it:powerstr5} $(A(t))^{mn}=((A(t))^m)^n$;
		\item\label{it:powerstr6} $(1+t)^m=1+mt+$ terms of higher degree;
		\item\label{it:powerstr7} $(A(t^k))^m=(A(t))^m\big|_{t\mapsto t^k}$.
	\end{enumerate}
\end{definition}

A power structure is called finitely determined if for each $N > 0$ there exists $M > 0$ such that the $N$-jet of the series $(A(t))^m$
(i.e. $(A(t))^{m}\; \mathrm{mod}\; t^{N+1})$ is determined by the $M$-jet of the series $A(t)$. It is possible to see that one can take $M = N$. 

The notion of a power structure over a ring is closely related with the
notion of a pre-$\lambda$-ring structure. A pre-$\lambda$-ring structure
is an additive-to-multiplicative homomorphism $R \to 1 + tR[[t]]$, $a \mapsto \lambda_a(t)$ ($\lambda_{a+b}(t) = \lambda_a(t) \cdot \lambda_b(t)$) such that
$\lambda_a(t) = 1 + at+\dots$. 
\begin{lemma} 
\label{lem:lambdafactor}
In a pre-$\lambda$-ring $R$ any series $A(t) \in 1+tR[[t]]$ can be in a unique way represented as $A(t)=\prod_{n=1}^{\infty} \lambda_{r_n}(t^n)$, for some $r_n \in R$. 
\end{lemma}
\begin{proof} Suppose that after the constant term the first nonzero coefficient in the expansion of $A(t)$ is $a_n$. That is, $A(t)=1+a_nt^n+a_{n+1}t^{n+1}+\dots+a_{2n}t^{2n}+\dots$.
Using the properties of a pre-$\lambda$-ring we write
\[ A(t)=\lambda_{a_n}(t^n)\cdot\underbrace{\left(1+b_{n+1}t^{n+1}+\dots\right)}_{=:B(t)}.  \]
Then we do the same with $B(t)$, in which the first nonzero coefficient after the constant term is in degree $n+1$.
Eventually, $A(t)$ can be written as a product of the form $\prod_{n=1}^{\infty} \lambda_{r_n}(t^n)$ with $r_n \in R$.
\end{proof}
\begin{corollary}[\cite{gusein2006power, gusein2019grothendieck}]
\label{cor:preLpower}
A pre-$\lambda$-ring structure on a ring defines a finitely determined power structure over it as follows. Writing $A(t)=\prod_{i=1}^{\infty} \lambda_{r_i}(t^i)$, its $m$-th power is given by the formula:
\[(A(t))^m\coloneqq \prod_{i=1}^{\infty} \lambda_{mr_i}(t^i).\]
This correspondence gives a surjective map from the set of
pre-$\lambda$-structures to the set of finitely determined power structures. The preimage of a power structure consists of all pre-$\lambda$-structures given by the formula
$\lambda_a(t) = (\lambda_1(t))^a$ with an arbitrary $\lambda_1(t) = 1 + t + \sum_{i=2}^{\infty}a_i t^i$.
\end{corollary}

\begin{example}  
\label{ex:1lring}	
A 1-$\lambda$-ring is a commutative unital ring $R$ equipped with operations $\Sym^k$ for every $k \geq 0$. These operations satisfy the following properties for every $m,n \in R$ and $k \geq 0$:
	\begin{enumerate}
		\item\label{it:2lrprop1} $\Sym^0(m)=1$;
		\item\label{it:2lrprop2} $\Sym^1(m)=m$;
		\item\label{it:2lrprop3} $\Sym^k(m+n)=\sum_{l=0}^k \Sym^l(m) \cdot \Sym^{k-l}(n)$;
		\item\label{it:2lrprop4} $\Sym^k(1)=1$.
	\end{enumerate}	
	The zeta function of an element $m \in R$ in a 1-$\lambda$-ring is the generating function of the symmetric powers of $m$:
	\[ Z(m,t)=\sum_{m=0}^{\infty}\Sym^k(m)t^k \in 1+tR[[t]].  \]
	A 1-$\lambda$-ring is a pre-$\lambda$-ring with $\lambda^1_m(t)=Z(m,t)$; the superscript 1 indicates that the structure comes from 1-$\lambda$-operations. By Corollary~\ref{cor:preLpower}, there is a corresponding power structure on $R$ such that 
	\begin{equation} 
	\label{eq:Zmtzeta}	
	Z(m,t)=(Z(1,t))^m=\left(\sum_{k=0}^{\infty}\Sym^k(1)\right)^m=(1+t+t^1+\dots)^m=\left(\frac{1}{1-t}\right)^m \end{equation}
In particular, $\lambda^1_m(t)=(1-t)^{-m}$.
\end{example}


\begin{example}[{\cite[Theorem 2]{gusein2004power}}] 
\label{ex:K0var}	
For a quasi-projective variety $M$, the symmetric powers $\Sym^n(M)=M^n/S_n$ are defined in the usual way. These descend to 1-$\lambda$-operations on the Grothendieck ring $K_0(Var)$ of quasi-projective varieties. By Example~\ref{ex:1lring},
\begin{equation}
\label{eq:zmotprod}
\left(\frac{1}{1-t}\right)^{[M]}=\sum_{k=0}^{\infty} [\Sym^k(M)]t^n =:Z_{mot}(M,t),\end{equation}
the motivic zeta function of $M$.  
\end{example}

\begin{example}[\cite{ganter2014symmetric}]
A 2-$\lambda$-ring has the same properties as a 1-$\lambda$-ring from Example~\ref{ex:1lring}, except that property~\ref{it:2lrprop4} is replaced by the identity
\[ \Sym^k(1)=p(k) \]
for all $k \in \mathbb{N}$ where $p(k)$ is the number of partitions of $k$.
Again, the zeta function of an element $m \in R$ in a 2-$\lambda$-ring is the generating function of the symmetric powers of $m$:
\[ Z(m,t)=\sum_{k=0}^{\infty}\Sym^k(m)t^k \in 1+tR[[t]].  \]
It defines a pre-$\lambda$-ring structure as $\lambda^2_m(t)=Z(m,t)$, which in turn induces a power structure by Corollary~\ref{cor:preLpower}; the superscript 2 indicates the 2-$\lambda$-ring property. 
\end{example}

The zeta function of the multiplicative unit of any 2-$\lambda$-ring is
\begin{equation} \label{eq:zetaunit} \lambda_1^2(t)=Z(1,t)=\sum_{k=0}p(k)t^k=\prod_{n=1}^{\infty}\frac{1}{1-t^n}\end{equation}
where we adopted the convention that $p(0)=1$. 
Because the power structure is induced by a pre-$\lambda$-ring structure, the identity $\lambda^2_m(t)=(\lambda^2_1(t))^m$ must hold for every $m \in R$ (see Corollary~\ref{cor:preLpower}). This implies the following expression for the zeta function of an arbitrary element.
\begin{corollary}
	\label{cor:2lambdazeta}
	In a 2-$\lambda$-ring, for any $m \in R$
	\[ Z(m,t)=Z(1,t)^m=\left(\prod_{n=1}^{\infty}\frac{1}{1-t^n}\right)^m.\]
\end{corollary}

 \subsection{Categorical symmetric powers}
	\label{subsec:sympow}
	
	Let $G$ be a finite group. A 2-representation (\cite[Section 4]{ganter2008representation}) of $G$ on a linear category $\CV$ consists of a collection $(\rho(g))_{g \in G}$ of linear functors $\rho(g) \colon \CV \to \CV$ as well as an isomorphism of functors
	\[ \phi_{g,h}\colon (\rho(g) \circ \rho(h)) \xRightarrow{\cong}{} \rho(gh) \]
	for every pair of elements $(g,h)$ of a $G$ and an isomorphism of functors
	\[  \phi_1 \colon \rho(1)  \xRightarrow{\cong}{} \mathrm{Id}_{\CV}.\]
	These have to satisfy  the identities
	\[ \begin{gathered}\phi_{(gh,k)}(\phi_{g,h} \circ \rho(k))=\phi_{(g,hk)}(\rho(g) \circ \phi_{h,k}), \\ \quad \phi_{1,g}=\phi_1 \circ \rho(g) \quad \textrm{and} \quad \phi_{g,1}=\rho(g) \circ \phi_1
	\end{gathered}\]
	for any $g,h,k \in G$.
	
	\begin{definition}
		Suppose we are given a 2-representation of $G$ on $\CV$. A $G$-equivariant object of $\CV$ is a pair $(A, (\epsilon_g)_{g \in G})$ where $A \in \mathrm{Ob}(\CV)$ and $(\epsilon_g)_{g \in G}$ is a family isomorphisms
		\[ \epsilon_g\colon A \xrightarrow{\sim} \rho(g)A \]
		satisfying the following compatibility conditions:
		\begin{enumerate}
			\item for $g=1$ we have
			\[\epsilon_1= \phi^{-1}_{1,A}\colon A \mapsto \rho(1)A;\]
			\item for any $g,h \in G$ the diagram
			\[
			\begin{tikzcd}[column sep = large, row sep=large]
				A \arrow[r,"\epsilon_g"] \arrow[d,"\epsilon_{gh}"] & \rho(g)(A) \arrow[d,"\rho(g)(\epsilon_h)"]\\
				\rho(gh)(A) & \rho(g)(\rho(h)(A)) \arrow[l,swap, "\phi_{g,h,A}"]
			\end{tikzcd}
			\]
			is commutative.
		\end{enumerate}
		A morphism of equivariant objects from $(A, (\epsilon_g)_{g \in G})$ and $(B, (\eta_g)_{g \in G})$ is a morphism $f\colon A \to B$ commuting with the $G$-action. 
		
		The category of $G$-equivariant objects in $\CV$ is denoted as $\CV^G$.
	\end{definition}

	\begin{example}
		\label{ex:quotcat}
		Let $M$ be a scheme of finite type over $k$.
		\begin{enumerate} 
			\item Let $\mathcal{O}_M\mhyphen \mathrm{mod}$ be the abelian category of all sheaves of $\mathcal{O}_M$-modules. Let $I(\mathcal{O}_M\mhyphen \mathrm{mod})$ be the full dg subcategory of $C(\mathcal{O}_M\mhyphen \mathrm{mod})$ consisting of h-injective complexes of injective objects. 
			Then $I(\mathcal{O}_M\mhyphen \mathrm{mod})$ is pretriangulated, and the composition $\epsilon_{I(\mathcal{O}_M\mhyphen \mathrm{mod})} \colon H^0(I(\mathcal{O}_M\mhyphen \mathrm{mod})) \to H^0(C(\mathcal{O}_M\mhyphen \mathrm{mod})) \to D(\mathcal{O}_M\mhyphen \mathrm{mod})$ is an equivalence \cite[2.1.1]{schnurer2015six} (see also \cite[Example 5.2]{bergh2016geometricity} and \cite[Thm. 14.3.1 (iii)]{kashiwara2005categories}). Therefore, $I(\mathcal{O}_M\mhyphen \mathrm{mod})$ is a dg enhancement of $D(\mathcal{O}_M\mhyphen \mathrm{mod})$.
			\item The subcategory $D(M) \coloneqq D_{pf}(M) \subset D(\mathcal{O}_M\mhyphen \mathrm{mod})$ consisting of perfect complexes is thick. Its enhancement is $I(M) \coloneqq I_{pf}(M) \subset I_{qc}(M)$. When $M$ is smooth, then $D(M)=D^b(\mathrm{Coh}(M))$, where $\mathrm{Coh}(M)$ is the category of coherent $\mathcal{O}_M$-modules.
		
		 \item Suppose $M$ is equipped with an action of a finite group $G$. Let $I(M)^G$ be the dg category of $G$-equivariant objects in $I(M)$.
		Since $I(M)$ is strongly pretriangulated, so is $I(M)^G$. 
		The composition $\epsilon_{I(M)^G} \colon H^0(I(M)^G) \to H^0(C(\mathcal{O}_M\mhyphen \mathrm{mod})^G) \to D(M)^G$ is an equivalence. By \cite[Proposition 4.3]{sosna2012linearisations} there is a dg equivalence $I(M)^G \cong I([M/G])$, which induces a triangulated equivalence $D(M)^G=D([M/G])$ (see also \cite[Theorem 9.6]{elagin2011cohomological}).
	\end{enumerate}	
\end{example}


	Let now $\CV$ be a pretriangulated category, and let $\CV^{n}$ be its  $n$-th (completed) tensor power \cite{bondal2004grothendieck}. Let the symmetric group $S_n$ act on the (generating) objects of $\CV^{n}$ by the transformation
	\[ \sigma (A_1 \otimes \cdots \otimes  A_n)=A_{\sigma^{-1}(1)} \otimes \cdots \otimes A_{\sigma^{-1}(n)},\]
	and similarly by componentwise permutation on the $\mathrm{Hom}$ complexes. Taking the identity for all $\phi_{g,h}$'s we obtain a 2-representation of $S_n$ on $\CV^{n}$. The $n$-th symmetric power $\Sym^n(\CV)$ is defined in  \cite{ganter2014symmetric} as the category of $S_n$-equivariant objects in $\CV^{n}$: 
	\[\Sym^n(\CV)=(\CV^{n})^{S_n}.\]

	Recall the notion of a geometric dg category \cite{orlov2016smooth} (see also the earlier \cite[Remark 7.2]{bondal2004grothendieck}). Briefly, a pretriangulated dg category is geometric, if it is a dg enhancement of a semiorthogonal summand in $D^b_{coh}(X)$ for some smooth projective variety $X$. Let  $K_0(gdg\mhyphen cat)$ be the Grothendieck group of geometric dg categories with relations coming from semiorthogonal decompositions. Namely, $K_0(gdg\mhyphen cat)$ is the free abelian group generated by quasi-equivalence classes of geometric pretriangulated dg categories $\CV$ modulo the relations
	\[ [\CV]=[\CA]+[\CB] \]
	where
	\begin{enumerate}
		\item $\CA$ and $\CB$ are dg subcategories of $\CV$;
		\item $H^0(\CA)$ and $H^0(\CB)$ are admissible subcategories of $\CV$;
		\item $H^0(\CV)=\langle H^0(\CA), H^0(\CB) \rangle$ is a semiorthogonal decomposition.
	\end{enumerate}
	If $\CM_1$ and $\CM_1$ are geometric dg categories, then their (completed) tensor product $\CM_1 \boxtimes \CM_2$ is also geometric. This gives $K_0(gdg\mhyphen cat)$ the structure of a ring.
	
\begin{example}
	\label{ex:dgcat2lr}	
	As it was observed by Galkin-Shinder \cite{galkin2015zeta}, the symmetric power operations 
	descend to well defined operations on $K_0(gdg\mhyphen cat)$. These will also be denoted by $\Sym^k$ and they define a 2-$\lambda$-ring structure on $K_0(gdg\mhyphen cat)$.
The (categorical) zeta function of $\CM \in K_0(gdg\mhyphen cat)$ is
\[ Z_{cat}(\CM,t)\coloneqq\sum_{k=0}^{\infty}[\Sym^k(\CM)]t^k \in 1+ tK_0(gdg\mhyphen cat)[[t]].\]
\end{example}

It was shown in \cite[Section~7]{bondal2004grothendieck} that there exists a ring homomorphism
	\[ 
	\phi \colon K_0(Var) \to  K_0(gdg \mhyphen cat).\\
	\]
	The homomorphism $\phi$ associates  with the class of a smooth projective variety $X$  the class of the category $I(X)$. Here it is used that $K_0(Var)$ can also be presented by classes of smooth projective varieties by \cite{looijenga2000motivic, bittner2004universal}.
	
The categorical zeta function of a quasi-projective variety $M$ is the zeta function of the image $\phi(M) \in K_0(gdg\mhyphen cat)$:
\[ Z_{cat}(\phi(M),t)=\sum_{k=0}^{\infty}[\Sym^k(\phi(M))]t^k \in 1+ tK_0(gdg\mhyphen cat)[[t]].\]
In particular, the categorical zeta function of a smooth projective variety $M$ is the zeta function of the class $[I(M)] \in K_0(gdg\mhyphen cat)$
\[ Z_{cat}(I(M),t)=\sum_{k=0}^{\infty}[\Sym^k(I(M))]t^k=\sum_{{ k}=0}^{\infty}[I([M^k/S_k])]t^k \in 1+ tK_0(gdg\mhyphen cat)[[t]]\]
where at the second equality we used Example~\ref{ex:quotcat}.

As a particular case of Corollary~\ref{cor:2lambdazeta} we obtain Theorem~\ref{thm:zetaexppow}.
\begin{corollary}
	\label{cor:zcatprod}
In $K_0(gdg\mhyphen cat)$, using the 2-$\lambda$-ring structure from Example~\ref{ex:dgcat2lr},
	\[Z_{cat}(\CM,t)=\prod_{n=1}^{\infty}\left( \frac{1}{1-t^n}\right)^{[\CM]}.\]
\end{corollary}

\subsection{Correspondence between 1-$\lambda$-structures and 2-$\lambda$-structures}

By Corollary~\ref{cor:preLpower} there is a simple general description of all pre-$\lambda$-structures with the same power structure: they are in one-to-one correspondence with formal series of type $1 + t + O(t^2)$; this series gives the zeta-function of 1 under the corresponding pre-$\lambda$-structure. We now make this more explicit for 1-$\lambda$ and 2-$\lambda$-structures. Our results are known to experts; we review them for later purpose.

Let $R$ be a 2-$\lambda$-ring.
\begin{lemma} 
\label{lem:invinf}	
	For each $k \geq 0$, there is an operator $A_k: R \to R$ such that
	\[Z(m,t) \cdot \left( \sum_{k=0}^{\infty}A_k(m)t^k \right) =1 . \]
\end{lemma}
\begin{proof}
There is an inverse to the series~\eqref{eq:zetaunit} defined by the property 
\begin{equation} 
\label{eq:akdef}
\left(\sum_{k=0}p(k)t^k\right)\left(\sum_{k=0}a_kt^k \right)=1. 
\end{equation}
The coefficient $a_k$, $k\geq 0$ can recursively be expressed as a polynomial of $p(i)$, $0 \leq i \leq k$.  Let us write
\[ a_k=A_k(p(1),\dots,p(k)). \]
We extend the polynomials $A_k$ to an operation on $R$ by
\begin{equation} 
\label{eq:Akdef}
A_k(m)\coloneqq[A_k(\Sym^1(m),\dots,\Sym^k(m))]. \end{equation}
By~Corollary~\ref{cor:preLpower}, these satisfy 
	\[
	\left(\sum_{k=0}^{\infty}A_k(1)t^k\right)^m=\left(\prod_{i=0}^{\infty}\lambda_{-1}(t^i)\right)^m=\prod_{i=0}^{\infty}\lambda_{-m}(t^i)=\sum_{k=0}^{\infty}A_k(m)t^k.
	\]
Therefore,
\[\begin{aligned}Z(m,t) \cdot \left( \sum_{k=0}^{\infty}A_k(m)t^k \right) & =Z(1,t)^m \cdot \left( \sum_{k=0}^{\infty}A_k(1)t^k \right)^m\\ & = \left(\sum_{k=0}p(k)t^k\right)^m\left(\sum_{k=0}a_kt^k \right)^m=1^m=1.
\end{aligned}	
\]
\end{proof}

\begin{example} The first few terms of the inverse series are $a_0=1$, $a_1=-p(1)$ and $a_2=-p(2)+p(1)p(1)$. Specifically, for a geometric dg category $\CM$, $A_0(\CM)=[1]$, $A_1(\CM)=-[\CM]$ and $A_2(\CM)=-[\Sym^2(\CM)]+[\CM^2]$.
 \end{example}


\begin{lemma}
\label{lem:invforpt}
\[ \prod_{i=1}^{\infty} \left( \prod_{n=1}^{\infty} \frac{1}{1-(t^i)^n} \right)^{\mu(i)}=\frac{1}{1-t} \]
where $\mu$ is the Möbius function.
\end{lemma}
\begin{proof} 
\begin{gather*}
\prod_{i=1}^{\infty} \left( \prod_{n=1}^{\infty} \frac{1}{1-(t^i)^n} \right)^{\mu(i)}= \prod_{k=1}^{\infty}\prod_{i \vert k}\left(\frac{1}{1-t^k} \right)^{\mu(i)}\\
=\prod_{k=1}^{\infty}\left(\frac{1}{1-t^k} \right)^{\sum_{i \vert k}\mu(i)} 
=\frac{1}{1-t},
\end{gather*}
where at the first equality we exchanged the order of the infinite products, and at the last equality we used that
\begin{equation*}
\sum_{i \vert k}\mu(i)=
\begin{cases} 
1 & \text{if } k=1, \\
0 & \text{otherwise.} 
\end{cases}
\end{equation*}
\end{proof}


Let $R$ be a 2-$\lambda$-ring. By Corollary~\ref{cor:preLpower} the 2-$\lambda$-ring structure defines a power structure on $R$. Using Corollary~\ref{cor:preLpower} in the opposite direction together with \eqref{eq:Zmtzeta}	and Lemma~\ref{lem:invinf} we can define a 1-$\lambda$-ring structure on $R$ giving rise to the same power structure as (the coefficients of)
\begin{equation} 
\label{eq:powcat}
\lambda^1_m(t)\coloneqq(1 - t)^{-m}=
 \prod_{i=1}^{\infty} \left( \prod_{n=1}^{\infty} \frac{1}{1-(t^i)^n} \right)^{m\mu(i)}=\sum_{k\geq 0} \sum_{\underline{k}: \sum_i ik_i=k} \prod_i T_{k_i}(m)t^k 
\end{equation}
where
\begin{equation}
	\label{eq:Tkidef} 
	T_{k_i}(m)\coloneqq
	\begin{cases} 
		\Sym^{k_i}(m) & \text{if } \mu(i)=1, \\
		1 & \text{if } \mu(i)=0, \\
		A_{k_i}(m) & \text{if } \mu(i)=-1.
	\end{cases}
\end{equation}
Here at the last step we expanded the infinite product and collected the coefficients of $t^k$.
 \begin{lemma}
 \label{lem:subscriptadd}
 \[\lambda^1_{m+n}(t)=\lambda^1_m(t)\cdot \lambda^1_n(t)\]
 In particular, the coefficients of expression \eqref{eq:powcat} define a 1-$\lambda$-ring structure.
\end{lemma}
 \begin{proof}
 	By the property given in Definition~\ref{def:powerstr}~\eqref{it:powerstr4} of a power structure, $(1 - t)^{-(m+n)} = (1 - t)^{-m}(1 - t)^{-n}$.
 \end{proof}
By Lemma~\ref{lem:invforpt} we have that
\[\lambda^1_{1}(t)=\frac{1}{1-t}=1+t+t^2+\dots,\]
which is equivalent to property (4) of a 1-$\lambda$-structure.
By the properties of a 2-$\lambda$-ring, $(1-t)^{-m}=1+\Sym^{1}(m)t+\dots=1+mt+\dots$. This gives property (2). Hence, by Lemma~\ref{lem:subscriptadd} we obtain that $\lambda^1_{m}(t)$ is indeed a 1-$\lambda$-structure. Again, by Corollary~\ref{cor:preLpower} this 1-$\lambda$-structure on $R$ defines the same power structure as the original 2-$\lambda$-structure $\lambda^2_{m}(t)$.


\begin{example}
\label{ex:k0gdgcat}
Writing out definition \eqref{eq:powcat} explicitly, the 1-$\lambda$-ring structure on $K_0(gdg\mhyphen cat)$ is defined by the formula
\[
 \lambda^1_{\CM}(t)\coloneqq(1 - t)^{-\CM}= \prod_{i=1}^{\infty}Z_{cat}(\CM,t^i)^{\mu(i)}=\sum_{k\geq 0} \sum_{\underline{k}: \sum_i ik_i=k} \prod_i T_{k_i}(\CM)t^k 
\]
where $T_{k_i}$ is as in \eqref{eq:Tkidef}.
\end{example}

In the other direction, given a 1-$\lambda$-structure $\lambda^1_m(t)$ on $R$ the series
\begin{equation}
\label{eq:2lfrom1l}	
 \lambda^2_m(t)\coloneqq\prod_{k=1}^{\infty}(1-t^k)^{-m}=\prod_{k=1}^{\infty}Z(m,t^k)= \prod_{k=1}^{\infty}\lambda^1(m,t^k)\end{equation}
gives a 2-$\lambda$-structure over $R$. Indeed, say, property (3) is implied by the identity
	\[  \prod_{k=1}^{\infty}(1-t^k)^{-m-n}=\prod_{k=1}^{\infty}(1-t^k)^{-m}(1-t)^{-n}=\left( \prod_{k_1=1}^{\infty}(1-t^{k_1})^{-m} \right) \left(\prod_{k_2=1}^{\infty}(1-t^{k_2})^{-n} \right).\]
Once again, the 2-$\lambda$-structure defined this way gives the same power structure on $R$ as $\lambda^1_m(t)$.

\subsection{Compatibility of power structures}
\label{subsec:powercat}



Let $R_1$ and $R_2$ be rings. A ring homomorphism
$ \phi \colon R_1 \to R_2$ induces a natural homomorphism $R_1[[t]] \to R_2[[t]]$
(also denoted by $\phi$) by $\phi(\sum_n a_n t^n)=\sum_n \phi(a_n)t^n$.

\begin{lemma}
	\label{lem:ringhomo} Suppose that $R_1$ and $R_2$ are rings with finitely determined power structures. If the ring homomorphism $\phi \colon R_1 \to R_2$ is such that $\phi((1-t)^{-a})=\phi(1-t)^{-\phi(a)}$, then $\phi((A(t))^m)=(\phi(A(t)))^{\phi(m)}$.
\end{lemma}
\begin{proof}
	Applying~Lemma~\ref{lem:lambdafactor} in the setting of Example~\ref{ex:1lring} we have that every $A(t) \in  1+tR_1[[t]]$ can be written as a product of the form $\prod_{n=1}^{\infty}(1-t^n)^{-r_n}$ for some $r_n \in R_1$.
	By properties \eqref{it:powerstr3} and \eqref{it:powerstr7} from Definition~\ref{def:powerstr} and the finite determinacy of the power structure one
	has that
	\begin{equation} \label{eq:factpowdef} (A(t))^m= \prod_{n=1}^{\infty}(1-t^n)^{-r_nm}.\end{equation} 
	Hence, to check a property of a ring homomorphism with respect to finitely determined power structures it is enough to check it on elements of the form $(1 - t)^{-a}$, $a \in R$.  This observation implies the lemma.
\end{proof}

Suppose that $R_1$ is equipped with a 1-$\lambda$-ring structure and $R_2$ is equipped with a 2-$\lambda$-ring structure. In particular, both rings carry a finitely determined power structure. Denote by $Z_1(m,t)$ and $Z_2(m,t)$ the corresponding zeta functions. The following is our key observation. 
\begin{theorem} 
\label{thm:aux}
	Let $\phi$ be a homomorphism such that $Z_2(\phi(m),t)=\prod_{n \geq 1} \phi(Z_1(m,t))$. Then
	\[\phi((A(t))^m)=(\phi(A(t)))^{\phi(m)}.\]
\end{theorem}
\begin{proof}
	Since $\phi$ is a ring homomorphism, 
	$\phi \left( (1-t)^{-1}\right) = (1-t)^{-1}.$
	Hence, for any $M \in R_1$
	\begin{gather*}
		\phi\left( \frac{1}{1-t}\right)^{\phi(M)}=\left( \frac{1}{1-t}\right)^{\phi(M)}= \prod_{m=1}^{\infty} \left( \prod_{n=1}^{\infty} \frac{1}{1-(t^m)^n} \right)^{\mu(m) \phi(M)}\\ 
		=\prod_{m=1}^{\infty} Z_{2}(\phi(M),t^m)^{\mu(m)} = \prod_{m=1}^{\infty} \prod_{n = 1}^{\infty} \phi(Z_{1}(M,t^{mn}))^{\mu(m)}  \\=
		\phi\left(\prod_{m=1}^{\infty} \prod_{n = 1}^{\infty} Z_{1}(M,t^{mn})^{\mu(m)}\right) =\phi\left(\prod_{n=1}^{\infty} \prod_{k \vert n} Z_{1}(M,t^{k})^{\mu\left(\frac{k}{n}\right)}\right)\\ 
		=\phi\left(\prod_{n=1}^{\infty} Z_{1}(M,t^{k})^{\sum_{k \vert n}\mu(k)}\right)=\phi\left( Z_{1}(M,t)\right) 
		=\phi\left(\left( \frac{1}{1-t}\right)^M\right),
	\end{gather*}
	where at the second equality we have used Lemma~\ref{lem:invforpt}, at the third equality we have used Corollary~\ref{cor:zcatprod}, at the fourth equality we have used the assumption, 
	at the fifth equality we have used that $\phi$ is a ring homomorphism, and at the last equality we have used \eqref{eq:Zmtzeta}.
	Combining this calculation with Lemma~\ref{lem:ringhomo} gives the statement.
\end{proof}

Recall from \cite[Section 7]{bondal2004grothendieck} that there exists a ring homomorphism
\[ 
\phi \colon K_0(Var) \to  K_0(gdg \mhyphen cat).\\
\]
The homomorphism $\phi$ associates the class of the category $I(X)$ with the class of a smooth projective variety $X$ . By the above, this extends to a ring homomorphism
\[ \phi \colon 1+tK_0(Var)[[t]] \to 1+tK_0 (gdg\mhyphen cat)[[t]]. \]

Our main Theorem~\ref{thm:mupower}, evoked and proved as Theorem~\ref{thm:ringhomo2} below, shows that $\phi$ is compatible with the power structures on $K_0(Var)$ and $K_0 (gdg\mhyphen cat)$. The main ingredient of its proof is the following relation between the categorical and the motivic zeta function of a variety, which was first conjectured by Galkin and Shinder \cite{galkin2015zeta}.
\begin{theorem}[{\cite[Theorem B]{bergh2017categorical}}]
\label{thm:catmotrel}
For any quasi-projective variety $M$
\[ Z_{cat}(\phi(M),t)=\prod_{n \geq 1} \phi(Z_{mot}(M,t^n)). \]
\end{theorem}

\begin{theorem}
\label{thm:ringhomo2}
The ring homomorphism $\phi \colon 1+tK_0(Var)[[t]] \to 1+tK_0 (gdg\mhyphen cat)[[t]]$ satisfies
\[ \phi\left((A(t))^{m}\right)=(\phi(A(t)))^{\phi(m)}. \]
\end{theorem}
\begin{proof}
By Theorem~\ref{thm:catmotrel}, we can apply Thereom~\ref{thm:aux} in this setting to obtain the claim.
\end{proof}

\begin{corollary}\label{cor:lexchange}
\begin{enumerate}
\item Replace the 2-$\lambda$-ring structure on $K_0(gdg\mhyphen cat)$ with the 1-$\lambda$-ring structure using \eqref{eq:powcat}. Then $\phi$ is a morphism of 1-$\lambda$-rings:
\[ \phi(\lambda^1_M(t))=\lambda^1_{\phi(M)}(t).\] 
\item Replace the 1-$\lambda$-ring structure on $K_0(Var)$ with the 2-$\lambda$-ring structure using \eqref{eq:2lfrom1l}. Then $\phi$ is a morphism of 2-$\lambda$-rings:
\[ \phi(\lambda^2_M(t))=\lambda^2_{\phi(M)}(t).\] 
\end{enumerate}
\end{corollary}
\begin{proof} Both statements follow from Theorem~\ref{thm:ringhomo2} and the fact that both \eqref{eq:powcat} and \eqref{eq:2lfrom1l} preserve the power structure.
\end{proof}

\section{Applications}
\label{sec:appl}

\subsection{The Hilbert zeta function}


Let $M$ be any quasi-projective variety. The Hilbert zeta function is defined as the generating series of the motives of Hilbert scheme of points on $X$:
\[ H_M(t)=\sum_{n=0}^{\infty} [\mathrm{Hilb}^n(M)] t^n \; \in 1+tK_0(Var)[[t]].\]
Here $\mathrm{Hilb}^n(M)$ is the Hilbert scheme of $n$ points on $M$.
\begin{proposition}[{\cite[Theorem 1]{gusein2006power}}]
\label{prop:hilbexp}
For a smooth quasi-projective variety $M$ of dimension $d$
\[H_M(t)= \left(H_{(\SA^d,0)}(t)\right)^{[M]},\]
where $H_{(\SA^d,0)}(t)=\sum_{n=0}^{\infty}[\mathrm{Hilb}^n(\SA^d,0)]t^n$ is the generating series of the motives of Hilbert scheme of $n$ points on $\SA^d$ supported at the origin.
\end{proposition}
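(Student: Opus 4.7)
The plan is to exhibit a canonical stratification of $\mathrm{Hilb}^n(X)$ whose strata line up term-by-term with the coefficient of $t^n$ in formula \eqref{eq:powvar} applied to $A_i = [\mathrm{Hilb}^i(\SA^d,0)]$ and $M = X$. Given a $0$-dimensional subscheme $Z \subset X$ of length $n$, decompose $Z$ as the disjoint union of its connected components $Z_1,\dots,Z_r$ supported at distinct points $p_1,\dots,p_r$. Grouping by the lengths, $Z$ determines a partition type $\underline{k} = (k_i)_{i \geq 1}$ with $\sum_i i k_i = n$, where $k_i$ is the number of components of length $i$. This yields a locally closed stratification
\[\mathrm{Hilb}^n(X) = \bigsqcup_{\underline{k}\,:\,\sum i k_i = n} \mathrm{Hilb}^n_{\underline{k}}(X),\]
and by the scissor relation in $K_0(Var)$ it suffices to identify $[\mathrm{Hilb}^n_{\underline{k}}(X)]$ with the $\underline{k}$-summand in the definition of the power.

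The key geometric step is step 2: to show that
\[\mathrm{Hilb}^n_{\underline{k}}(X) \cong \left( (X^{\sum_i k_i} \setminus \Delta) \times \prod_i \mathrm{Hilb}^i(\SA^d,0)^{k_i} \right) \Big/ \prod_i S_{k_i}.\]
A subscheme concentrated at a single smooth point $p \in X$ depends only on the completed local ring $\widehat{\mathcal{O}}_{X,p} \cong k[[x_1,\dots,x_d]]$, which is isomorphic to $\widehat{\mathcal{O}}_{\SA^d,0}$. Thus, the set of subschemes of length $i$ supported at $p$ is canonically identified with $\mathrm{Hilb}^i(\SA^d,0)$. Combined with the choice of the unordered tuple of distinct support points and their multiplicity labels, this gives the claimed description set-theoretically. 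To upgrade this to an identification of classes in $K_0(Var)$, I would work étale-locally: since $X$ is smooth of dimension $d$, it admits an étale cover by opens of $\SA^d$, and the Hilbert scheme of ideals supported at a point is compatible with étale base change. This provides a Zariski-piecewise trivialization of the map $\mathrm{Hilb}^n_{\underline{k}}(X) \to (X^{\sum k_i} \setminus \Delta)/\prod S_{k_i}$ sending a subscheme to its labelled support, with fibre $\prod_i \mathrm{Hilb}^i(\SA^d,0)^{k_i}$. The scissor relations then imply the required equality of classes.

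Step 3 is then a direct summation: adding up over all partition types $\underline{k}$ with $\sum i k_i = n$ gives
\[[\mathrm{Hilb}^n(X)] = \sum_{\underline{k}\,:\,\sum i k_i = n} \left[ \left( (X^{\sum_i k_i} \setminus \Delta) \times \prod_i \mathrm{Hilb}^i(\SA^d,0)^{k_i} \right) \Big/ \prod_i S_{k_i} \right],\]
and comparing with \eqref{eq:powvar} with $A(t) = H_{\SA^d,0}(t)$ and $M = X$ identifies the right-hand side with the coefficient of $t^n$ in $\left(H_{\SA^d,0}(t)\right)^{[X]}$. Summing over $n$ gives the proposition.

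The main obstacle is the local triviality argument in step 2. While the formal/étale identification of subschemes supported at a smooth point with $\mathrm{Hilb}^i(\SA^d,0)$ is clear pointwise, turning this into an equality of classes in $K_0(Var)$ requires that the map from the stratum to its support is at least Zariski-locally trivial up to stratification; the cleanest way is to use an étale cover of $X$ by opens of $\SA^d$ and invoke that the "support" map factors through a relative Hilbert scheme which is étale-locally a product, then apply the scissor relations. Everything else in the argument is formal bookkeeping using the definition of the power structure and the already-established additivity of $[\,\cdot\,]$ under locally closed stratification.
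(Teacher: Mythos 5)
The paper does not prove Proposition~\ref{prop:hilbexp}; it cites it verbatim from \cite[Theorem 1]{gusein2006power} and uses it as a black box in Section 6. So there is no ``paper's own proof'' to compare against; what you have written is essentially a sketch of the original Gusein-Zade--Luengo--Melle-Hern\'andez argument, and in outline it is the right one: stratify $\mathrm{Hilb}^n(X)$ by the multiplicity type of the support, identify each stratum with the corresponding summand in formula~\eqref{eq:powvar}, and sum using the scissor relations.

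Two points in your step 2 deserve more care. First, the phrase ``$X$ admits an \'etale cover by opens of $\SA^d$'' is backwards: $X$ is not covered by open subschemes of $\SA^d$. What is true, and what is actually used, is that $X$ has a \emph{Zariski} open cover $\{U_\alpha\}$ such that each $U_\alpha$ admits an \'etale morphism $\phi_\alpha\colon U_\alpha\to \SA^d$. Second, you should say explicitly why this gives the \emph{Zariski}-local triviality needed for the scissor relations rather than merely an \'etale-local one, which in general does not imply equality of classes in $K_0(Var)$. The mechanism is: an \'etale map $\phi_\alpha\colon U_\alpha\to\SA^d$ induces isomorphisms of completed local rings and hence of punctual Hilbert schemes in families, so the relative punctual Hilbert scheme over $U_\alpha$ is identified with $\phi_\alpha^*$ of the one over $\SA^d$; the latter is globally trivial (translations of $\SA^d$ act transitively and algebraically), hence so is the pullback over $U_\alpha$. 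Passing to ordered configurations in $U_\alpha^{\sum_i k_i}\setminus\Delta$, taking products, and then quotienting by the freely acting $\prod_i S_{k_i}$ gives the required Zariski-piecewise trivialization of $\mathrm{Hilb}^n_{\underline{k}}(X)\to (X^{\sum_i k_i}\setminus\Delta)/\prod_i S_{k_i}$ with fiber $\prod_i\mathrm{Hilb}^i(\SA^d,0)^{k_i}$. With these corrections the argument closes, and step 3 is the bookkeeping you describe.
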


Theorem~\ref{thm:ringhomo2} and Proposition~\ref{prop:hilbexp} together give the following result.
\begin{corollary} For a smooth projective variety $M$ of dimension $d$,
\[\sum_{n=0}^{\infty} [I(\mathrm{Hilb}^n(M))] t^n=\phi(H_M(t))=\phi\left(H_{\SA^d,0}(t^n)\right)^{[I(M)]}. \]
\end{corollary}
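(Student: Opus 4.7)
The statement combines two pieces: a motivic identity coming from Proposition \ref{prop:hilbexp}, and a compatibility between the ring homomorphism $\mu$ and the two power structures in play. My plan is to verify each equality in turn.

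First I would establish the left equality $\sum_{n=0}^{\infty} [I(\mathrm{Hilb}^n(X))] t^n=\mu(H_X(t))$. Since $X$ is smooth and projective, $\mathrm{Hilb}^n(X)$ is smooth (and projective, as $X$ is projective), so by Theorem \ref{thm:motmeasure} we have $\mu([\mathrm{Hilb}^n(X)])=[I(\mathrm{Hilb}^n(X))]$. The identity then follows by applying $\mu$ coefficient-wise to the defining series $H_X(t)=\sum_n [\mathrm{Hilb}^n(X)]t^n$, which is exactly the extension of $\mu$ to power series described just before Proposition \ref{prop:powerhom}.

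Second, for the right equality, the plan is to combine Proposition \ref{prop:hilbexp} with Proposition \ref{prop:powerhom}(2). Proposition \ref{prop:hilbexp} rewrites the motivic generating series as a power in $K_0(Var)$, namely $H_X(t)=(H_{\SA^d,0}(t))^{[X]}$. Applying $\mu$ to both sides and invoking Proposition \ref{prop:powerhom}(2) with $A(t)=H_{\SA^d,0}(t)$ and $m=[X]$ gives
\[ \mu(H_X(t))=\mu\bigl((H_{\SA^d,0}(t))^{[X]}\bigr)=\prod_{n=1}^{\infty}\bigl(\mu(H_{\SA^d,0}(t^n))\bigr)^{\mu([X])}=\prod_{n=1}^{\infty}\mu(H_{\SA^d,0}(t^n))^{[I(X)]}, \]
where the last step uses $\mu([X])=[I(X)]$.

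The substantive content is therefore entirely contained in the ingredients already at our disposal, and no new obstacle arises at this stage. The only point that requires a moment's care is checking that Proposition \ref{prop:powerhom}(2) genuinely applies: the hypothesis of Proposition \ref{prop:powerhom}(1), that $\mu((1-t)^{-a})=\prod_{n=1}^{\infty}(1-t^n)^{-\mu(a)}$, is exactly the content of Theorem \ref{thm:zcatprod} (Theorem \ref{thm:zetaexppow}) applied to the categorical zeta function of $\mu(a)$, combined with the motivic identity $(1-t)^{-[M]}=\sum_n [\Sym^n(M)]t^n$ from \eqref{eq:zmotprod} and the fact that $\mu$ sends $[\Sym^n(M)]$ to $[\Sym^n(I(M))]$ on classes of smooth projective varieties. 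Once that verification is in hand, the corollary is immediate.
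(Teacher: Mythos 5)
Your overall strategy matches the paper's: the paper's entire proof is the one-line citation of Propositions \ref{prop:powerhom}(2) and \ref{prop:hilbexp}, and your argument indeed combines exactly these two results to obtain the right-hand equality. However, two of the supporting claims you add to flesh out the proof are incorrect, and both deserve to be flagged.

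First, your justification of the left equality rests on the assertion that ``$\mathrm{Hilb}^n(X)$ is smooth'' whenever $X$ is smooth projective. This is false once $\dim X \geq 3$; for example $\mathrm{Hilb}^4(\SA^3)$ is already singular at the punctual ideal $\mathfrak{m}^2$. For $d \leq 2$ your reasoning is fine, but for higher $d$ the identification $\mu([\mathrm{Hilb}^n(X)])=[I(\mathrm{Hilb}^n(X))]$ does not follow from Theorem \ref{thm:motmeasure}, since that theorem only gives the formula on smooth projective inputs. (The paper's one-line proof does not address this point either, but you have made an explicit false claim that should be removed or restricted.)

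Second, and more fundamentally, your closing paragraph invokes ``the fact that $\mu$ sends $[\Sym^n(M)]$ to $[\Sym^n(I(M))]$ on classes of smooth projective varieties.'' This is not a fact; it is the opposite of what the paper establishes. Here $[\Sym^n(M)]\in K_0(Var)$ is the class of the quotient \emph{variety} $M^n/S_n$, whereas $\Sym^n(I(M)) = I([M^n/S_n])$ is the derived category of the quotient \emph{stack}, and these differ: already for $M = \{\mathrm{pt}\}$ one has $\mu([\Sym^n(\mathrm{pt})]) = 1$ while $[\Sym^n(1)] = p(n)$, as emphasized in the paper's introduction. Indeed, the nontrivial product formula of Corollary \ref{cor:catmotrel}, $Z_{cat}(\mu(X),t)=\prod_{n\geq 1}\mu(Z_{mot}(X,t^n))$, is precisely the statement that $\mu$ does \emph{not} intertwine the motivic and categorical symmetric powers in the naive way, and it is the main theorem of the paper, not a triviality one can appeal to when verifying Proposition \ref{prop:powerhom}. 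Your attempted justification of the hypothesis of Proposition \ref{prop:powerhom}(1) therefore does not go through; if you wish simply to cite Proposition \ref{prop:powerhom}(2) as the paper does, that is the intended reading, but the verification you offer misreads the content of Theorem \ref{thm:zcatprod}.
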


\begin{example}
For $d=1$ it is known that $\mathrm{Hilb}^n(\SA,0)=\mathrm{Sym}^n(\SA,0)=\{pt\}$. Hence, $H_{(\SA,0)}(t)=(1-t)^{-1}$.
By \eqref{eq:zmotprod}, this implies that for a smooth projective curve $C$
\[\sum_{n=0}^{\infty} [I(\mathrm{Hilb}^n(C))] t^n=\left(\frac{1}{1-t}\right)^{[I(C)]}=\phi(Z_{mot}(C,t)).\]
\end{example}

\begin{example}
For $d=2$ it was shown in \cite[Theorem~1.1(iv)]{ellingsrud1987homology} (see also \cite{gusein2004power}) that
\[ H_{(\SA^2,0)}(t)=\prod_{n=1}^{\infty}\frac{1}{1-\mathbb{L}^{n-1}t^n}. \]
Here $\mathbb{L}=[\mathbb{A}^1]$, the class of the affine line.
Since $\phi(\mathbb{L})=1$, it follows that for a smooth projective surface $S$
\[\sum_{n=0}^{\infty} [I(\mathrm{Hilb}^n(S))] t^n=\prod_{n=1}^{\infty}\left(\frac{1}{1-t^n}\right)^{[I(S)]}=\prod_{n=1}^{\infty}\phi(Z_{mot}(S,t^n))=Z_{cat}(S,t).\]
\end{example}

\begin{example}
For $d=3$ the Hilbert scheme of points is not smooth any more  \cite[Page~341]{behrend2008symmetric}. It turns out \cite{behrend2013motivic} that instead of the usual Grothendieck ring of varieties it is better to work in $K_0(Var)[\mathbb{L}^{-\frac{1}{2}}]$, and the Hilbert scheme of a threefold has a virtual motive $[\mathrm{Hilb}^n(M)]_{vir} \in K_0(Var)[\mathbb{L}^{-\frac{1}{2}}]$.
The Hilbert zeta function of a quasiprojective threefold $M$ is
\[ H_{M}^{vir}(t)=\sum_{n=0}^{\infty} [\mathrm{Hilb}^n(M)]_{vir} t^n \; \in 1+tK_0(Var)[\mathbb{L}^{-\frac{1}{2}}][[t]],\]
and by \cite[Proposition~4.2]{behrend2013motivic} this possesses the property from Proposition \ref{prop:hilbexp}:
\[H_M^{vir}(t)= \left(H^{vir}_{(\SA^3,0)}(t)\right)^{[M]}.\]
The expression
\[ H^{vir}_{(\SA^3,0)}(t)=\prod_{n=1}^{\infty}\prod_{k=0}^{n-1}\frac{1}{1-\mathbb{L}^{k+2-n/2}t^n} \]
was proved in \cite[Theorem~3.7]{behrend2013motivic}.
Formally, the morphism
\[\phi\colon K_0(Var) \to K_0(gdg\mhyphen cat)\] 
can be extended to a morphism
\[\phi\colon K_0(Var)[\mathbb{L}^{-\frac{1}{2}}] \to K_0(gdg\mhyphen cat),\]
such that $\phi(\mathbb{L}^{-\frac{1}{2}})=-\phi(pt)=-1$. The justification of this extension comes from \cite[Proposition 1.15]{behrend2013motivic},
and our results carry over to this case. As a consequence, for a smooth projective threefold $M$
\[\sum_{n=0}^{\infty} \phi([\mathrm{Hilb}^n(M)]_{vir}) t^n=\prod_{n=1}^{\infty}\left(\frac{1}{1-(-t)^n}\right)^{n[I(M)]}=\prod_{n=1}^{\infty}\phi(Z_{mot}(M,(-t)^n))^n.\]
\end{example}

\subsection{Adams operations}

Following \cite{knutson2006lambda,gorsky2009adams} we introduce Adams operations for the various power structures.
The motivic Adams operations $\Psi_n^{mot}$ for a class $M \in  K_0(Var)$ are defined via
\[ Z_{mot}(M,t)^{-1} \cdot \frac{d}{dt}Z_{mot}(M,t)=\sum_{n=1} \Psi_n^{mot}(M)t^{n-1}. \]
Similarly, we define the categorical Adams operations $\Psi_n^{cat}$ for a class $\mathcal{M} \in K_0(gdg\mhyphen cat)$ as
\[ Z_{cat}(\mathcal{M},t)^{-1} \cdot \frac{d}{dt}Z_{cat}(\mathcal{M},t)=\sum_{n=1} \Psi_n^{cat}(\mathcal{M})t^{n-1}. \]


\begin{lemma}
The motivic and categorical Adams operations are related as
\[ \Psi_n^{cat}(\phi(M))= \sum_{k|n} \frac{n}{k} \phi(\Psi_k^{mot}(M)). \]
\end{lemma}
\begin{proof}
\begin{gather*}
\sum_{n=1} \Psi_n^{cat}(\phi(M))t^{n-1} = Z_{cat}(\phi(M),t)^{-1} \cdot \frac{d}{dt}Z_{cat}(\phi(M),t)\\ 
= \prod_{m \geq 0} \phi(Z_{mot}(M,t^m))^{-1} \cdot \frac{d}{dt}\prod_{m \geq 0} \phi(Z_{mot}(M,t^m)) \\
= \prod_{m \geq 0} \phi(Z_{mot}(M,t^m))^{-1} \cdot \left( \sum_{m=1}^{\infty} \frac{d}{dt} \phi(Z_{mot}(M,t^m))  \prod_{\substack{k \geq 0 \\ k \neq m}} \phi(Z_{mot}(M,t^k))\right) \\
= \prod_{m \geq 0} \phi(Z_{mot}(M,t^m))^{-1} \prod_{m \geq 0} \phi(Z_{mot}(M,t^m)) \\ \cdot \sum_{m=1}^{\infty} \phi\left( \frac{d}{dt} Z_{mot}(M,t^m) \right) \phi(Z_{mot}(M,t^m))^{-1}  \\
= \sum_{m=1}^{\infty}  \phi\left( Z_{mot}(M,t^m)^{-1} \frac{d}{dt} Z_{mot}(M,t^m)  \right)\\
=  \sum_{m=1}^{\infty}  \phi\left(  mt^{m-1} \left(Z_{mot}(M,t)^{-1}\frac{d}{dt}Z_{mot}(M,t)\right)\Bigg|_{t \mapsto t^m}  \right) \\
= \sum_{m=1}^{\infty} \phi\left( mt^{m-1} \cdot \sum_{k}\Psi_k^{mot}(M)t^{m(k-1)} \right) = \sum_{m=1}^{\infty} \sum_{k} m\phi(\Psi_k^{mot}(M))t^{mk-1}
\end{gather*}
where at the second equality we used Theorem \ref{thm:catmotrel}, at the fourth equality we used that $\phi$ is a ring homomorphism, and at the fifth equality, besides cancellation, we used Theorem \ref{thm:mupower}.
\end{proof}

\begin{example}
\[ \Psi_n^{cat}(1)= \sum_{k|n} \frac{n}{k} \phi(\Psi_k^{mot}(1))=\sum_{k|n} \frac{n}{k}\phi(1)=\sum_{k|n} k = \sigma(n), \]
the sum of divisors of $n$.
\end{example}

\begin{lemma}[{\cite[Chapter 1, 2.14 and 2.15]{macdonald1998symmetric}}] After tensoring with $\mathbb{Q}$, the following identities relate the zeta functions and the corresponding Adams operations:
\[Z_{mot}(M,t)=\mathrm{exp}\left( \sum_{n \geq 1}\Psi^{mot}_n(M) \cdot \frac{t^n}{n}\right) \in 1+tK_0(Var) \otimes\mathbb{Q}[[t]], \]
\[Z_{cat}(\mathcal{M},t)=\mathrm{exp}\left( \sum_{n \geq 1} \Psi^{cat}_n(\mathcal{M}) \cdot \frac{t^n}{n}\right) \in 1+tK_0(gdg\mhyphen cat) \otimes \mathbb{Q}[[t]] .\]
\end{lemma}

\begin{corollary} In $1+tK_0(gdg\mhyphen cat) \otimes \mathbb{Q}[[t]]$,
\[ Z_{cat}(\phi(M),t) =\mathrm{exp}\left(\sum_{n \geq 1} \sum_{k |n} \phi(\Psi^{mot}_n(M)) \cdot \frac{t^n}{k}\right).   \]
\end{corollary}

\subsection{Linear algebraic groups}
Denote by $G^0$ the identity component of a linear algebraic group $G$. Recall that the rank $\mathrm{rk}(G) $ of $G$ is the dimension of a maximal torus $T \subset G$. 
\begin{corollary} Let $G$ be a linear algebraic group.
Then the image under $\phi$ of any series in $K_0(Var)$ with exponent $[G]$ is
\[\phi\left((A(t))^{[G]}\right) = 
\begin{cases}
1,  & \textrm{if }\; \mathrm{rk}(G) > 0, \\
\phi(A(t))^{|G:G^0|},   & \textrm{if }\; \mathrm{rk}(G) = 0
\end{cases}\]
 in $1+tK_0(gdg\mhyphen cat)[[t]]$.
In particular, the categorical zeta function of a linear algebraic group of positive rank is 1.
\end{corollary}
\begin{proof}
By Theorem \ref{thm:mupower},
\[ \phi\left((A(t))^{[G]}\right)=(\phi(A(t)))^{\phi(G)}. \]
We will show that $\phi(G)=0$ if $\mathrm{rk}(G) > 0$, and $\phi(G)=|G:G^0|$ otherwise. The components of an algebraic group $G$ are isomorphic as varieties. 
We therefore assume that $G$ is connected (and show that $\phi(G)=1$, if the rank is 0) . Let $U$ denote the unipotent radical of $G$ and $H = G/U$. Then $U$ is isomorphic to a subgroup of the unitriangular matrices $n \times n$ matrices \cite[Section 4.8]{borel2012linear}. The logarithm map defines an isomorphism of varieties between $U$ and its Lie algebra, which means that $U$ is isomorphic to $\mathbb{A}^{\mathrm{dim} U}$. The existence of Levi decompositions gives an isomorphism $G\cong U \times H$. Let $B$ denote a Borel subgroup of $H$, $T$ a maximal torus of $B$, and $V$ the
unipotent radical of $B$. Decomposition into Schubert cells gives a stratification of the flag variety $H/B$.  Every stratum of this stratification is isomorphic to $\mathbb{A}^r$ for some $r$ with the preimage $VwB \subset  H$ isomorphic to $\mathbb{A}^r \times B \cong \mathbb{A}^{r+\mathrm{dim} V} \times T$ \cite[Section 14.12]{borel2012linear}. Hence, the class $[G] \in K_0(Var)$ equals $[T]$ times the class of an affine space. As \[\phi(T)=\phi(\mathbb{G}_m^{\mathrm{rk}(G)})=(\phi(\mathbb{A}^1)-\phi(pt))^{\mathrm{rk}(G)}=(1-1)^{\mathrm{rk}(G)}=0^{\mathrm{rk}(G)},\]
the result follows.
\end{proof}

\subsection*{Acknowledgement} The author would like to thank to Jim Bryan, Sabin Cautis, Eugene Gorsky, Sándor Kovács, Daniel Litt, Roberto Pirisi, and Zinovy Reichstein for helpful comments and discussions. A part of this work was carried out while the author was at the Department of Mathematics, University of British Columbia, Canada. The author was supported by the János Bolyai Research Scholarship of the Hungarian Academy of Sciences. Competing interests: The author declares none.

\bibliographystyle{abbrv}
\bibliography{powerstr}


\end{document}